\theoremstyle{plain}
\newtheorem{thm}{\protect\theoremname}
\theoremstyle{plain}
\newtheorem{lem}[thm]{\protect\lemmaname}
\theoremstyle{plain}
\newtheorem{conj}[thm]{\protect\conjname}
\theoremstyle{plain}
\newtheorem{claim}[thm]{\protect\claimname}
\theoremstyle{remark}
\newtheorem{remark}[thm]{Remark}
\providecommand{\claimname}{Claim}
\providecommand{\lemmaname}{Lemma}
\providecommand{\theoremname}{Theorem}
\providecommand{\conjname}{Conjecture}
\begin{document}
\title{The upper logarithmic density of monochromatic subset sums}
\author{David Conlon\thanks{Department of Mathematics, California Institute of Technology, Pasadena, CA 91125. Email:  {\tt dconlon@caltech.edu.} Research supported by NSF Award DMS-2054452.} \and Jacob Fox\thanks{Department of Mathematics, Stanford University, Stanford, CA 94305. Email: {\tt jacobfox@stanford.edu}. Research supported by a Packard Fellowship and by NSF Awards DMS-1855635 and DMS-2154169.} \and Huy Tuan Pham\thanks{Department of Mathematics, Stanford University, Stanford, CA 94305. Email: {\tt huypham@stanford.edu}. Research supported by a Two Sigma Fellowship.}}
\date{}
\maketitle

\begin{abstract}
We show that in any two-coloring of the positive integers there is a color for which the set of positive integers that can be represented as a sum of distinct elements with this color has upper logarithmic density at least $(2+\sqrt{3})/4$ and this is best possible. This answers a forty-year-old question of Erd\H{o}s. 
\end{abstract}

\section{Introduction}

For a set $A$ of positive integers, the logarithmic density $d_\ell(A; x)$ of $A$ up to $x$ is $\frac{1}{\log x}\sum_{a \in A, a \leq x}1/a$, where $\log x$ denotes the natural logarithm of $x$. The {\it upper logarithmic density} of $A$ is then $\bar{d}_{\ell}(A)=\limsup_{x \to \infty} d_\ell(A; x)$. Such logarithmic density functions arise very naturally in number theory. For instance,  a classical result of Davenport and Erd\H{o}s~\cite{DE36} (see also~\cite{HR83}) shows that any set of positive integers $A$ with positive upper logarithmic density contains an infinite division chain, that is, an infinite sequence $a_{i_1} < a_{i_2} < \cdots$ with $a_{i_j} \in A$ and $a_{i_j} \mid a_{i_{j+1}}$ for all $j \geq 1$. Much more recently, the celebrated Erd\H{o}s discrepancy problem was settled by Tao~\cite{T16} using his progress~\cite{T162} on a logarithmically-averaged version of the Elliott conjecture on the distribution of bounded multiplicative functions.

Our concern here will be with a problem of Erd\H{o}s concerning subset sums. Given a set of integers $A$, the \emph{set of subset sums} $\Sigma(A)$ is the set of all integers that can be represented as a sum of distinct elements from $A$. That is, 
\[\Sigma(A) = \left\{ \sum_{s \in S} s : S \subseteq A \right\}.\]
Suppose now that $r \geq 2$ is an integer and consider a partition $\mathbb{N}=A_1 \sqcup \cdots \sqcup A_r$ of the positive integers into $r$ parts. In the problem papers~\cite{Er82, Er82a}, Erd\H{o}s noted that there must then be some $i \in [r]$ such that the upper density of $\Sigma(A_i)$ is $1$ and the upper logarithmic density of $\Sigma(A_i)$ is at least $1/2$. He also observed that if $A_2$ consists of those $n$ for which $\lfloor \log_4 \log_2 n \rfloor$ is even and $A_1$ is the complement of $A_2$, then the upper logarithmic density of both $\Sigma(A_1)$ and $\Sigma(A_2)$ is less than one. In fact, one can check that in this example each of $\Sigma(A_1)$ and $\Sigma(A_2)$ has upper logarithmic density 
$14/15$.\footnote{Erd\H{o}s incorrectly implies in \cite{Er82} that in his construction the upper logarithmic density of both $\Sigma(A_i)$ is at most $3/4$.} Following this line of inquiry to its natural end, Erd\H{o}s~\cite{Er82, Er82a} asked for a determination of $c_2$, the largest real number such that every two-coloring of the positive integers has a color class such that the upper logarithmic density of its set of subset sums is at least $c_2$.

More generally, let $c_r$ be the minimum, taken over all partitions of $\mathbb{N}$ into $r$ parts $A_1,\ldots,A_r$, of the maximum over $i=1,\dots,r$ of the upper logarithmic density of $\Sigma(A_i)$. That is, 
$$c_r=\min_{\mathbb{N}=A_1 \sqcup \cdots \sqcup A_r}\max_{i \in [r]}\bar{d}_{\ell}(\Sigma(A_i)).$$
Here we give a general upper bound for $c_r$ and, answering Erd\H{o}s' question, show that it is tight for $r = 2$. 
We suspect that our upper bound is also tight for all $r \geq 3$, but our methods do not seem sufficient for proving this. We refer the reader to the brief concluding remarks for a little more on this issue.

\begin{thm} \label{thm:main}
For any integer $r \geq 2$, $c_r$ is at most 
$$\left(1-\frac{1}{2b_0}\right)\left(1+\frac{1}{2rb_0-r}\right),$$ 
where $b_0$ is the unique root of the polynomial $b^r-2rb+r-1$ with $b > 1$, and this is tight for $r = 2$, where $c_2=(2+\sqrt{3})/4 \approx 0.93301$.
\end{thm}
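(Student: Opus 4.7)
The theorem splits into an upper bound on $c_r$ via explicit construction (valid for all $r \ge 2$) and a matching lower bound for $r = 2$. For the upper bound, the plan is to generalize Erd\H{o}s' log-log-geometric construction: fix $b = b_0$ and a large $M_0$, and let $M_k = M_0^{b^k}$. Partition $\mathbb{N}$ into the integer intervals $I_k = (M_k, M_{k+1}]$ and assign $I_k$ to color $A_i$ when $k \equiv i \pmod{r}$. Each color class is a disjoint union of complete integer intervals, and the subset sums of a complete interval $(N, M] \cap \mathbb{Z}$ cover $\{0\} \cup [N+1, \tfrac{1}{2}(M^2+M)-\tfrac{1}{2}(N^2+N)]$. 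Since cross-interval sums within one color are negligible for $b > 2$ (earlier same-color sums are dwarfed by the start of the next same-color interval), $\Sigma(A_i)$ is a controlled union of filled intervals $[M_k, \tfrac{1}{2}M_{k+1}^2]$ with $k \equiv i \pmod{r}$, separated by predictable gaps. The log density, maximized along the subsequence $x = \tfrac{1}{2}M_{k+1}^2$ (where a gap is about to begin), converges to $(2b-1)b^{r-1}/(2(b^r-1))$, which using the relation $b^r = 2rb - (r-1)$ rewrites as $(1-\tfrac{1}{2b_0})(1+\tfrac{1}{2rb_0-r})$; for $r = 2$ this collapses to $(2+\sqrt{3})/4$. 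Stationarity of the density in $b$ recovers precisely the equation $b^r - 2rb + r - 1 = 0$, justifying the choice of $b_0$.

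\medskip

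For the lower bound at $r = 2$, we must show that any two-coloring $\mathbb{N} = A_1 \sqcup A_2$ satisfies $\max_i \bar{d}_\ell(\Sigma(A_i)) \ge (2+\sqrt{3})/4$. The key analytic tool is a subset-sum lemma of Freiman--S\'ark\"ozy--Szemer\'edi type: for $A \subseteq (N, M] \cap \mathbb{Z}$ with $\sum_{a \in A} a \ge CM$ (for a suitable absolute constant $C$), $\Sigma(A)$ contains the integer interval $[M, \sum A - M]$. Applied to any interval $J = (N, M]$, whichever color has larger sum mass in $J$ has sum at least $\tfrac{1}{4}(M^2-N^2)$ and therefore contributes a filled integer interval to its subset sums of log length $\asymp \log M$. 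We iterate this on a log-log geometric sequence of scales $M_j = M_0^{b_0^j}$ (mirroring the construction) and extract, via pigeonhole, a single color $i$ that wins at a positive proportion of scales; summing its log-density contributions over winning scales while accounting for the gaps at losing scales reduces to a two-variable optimization whose extremizer again forces $b_0 = 2+\sqrt{3}$, yielding the bound $(2+\sqrt{3})/4$.

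\medskip

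The chief obstacle is the tightness of this lower bound. The elementary observation $\Sigma(A_1) \cup \Sigma(A_2) = \mathbb{N}$ (each $n$ is a singleton sum of its own color) gives only $\max_i \bar{d}_\ell(\Sigma(A_i)) \ge 1/2$, and naive refinements via the intersection $\Sigma(A_1) \cap \Sigma(A_2)$ are not tight either: in the extremal construction above, $\bar{d}_\ell(\Sigma(A_1) \cap \Sigma(A_2)) = (1+\sqrt{3})/4$, which is strictly less than $(2+\sqrt{3})/2 - 1 = \sqrt{3}/2$, so the limsup super-additivity $\bar{d}_\ell(\Sigma(A_1)) + \bar{d}_\ell(\Sigma(A_2)) \ge 1 + \bar{d}_\ell(\Sigma(A_1) \cap \Sigma(A_2))$ cannot produce the correct constant. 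Recovering $(2+\sqrt{3})/4$ instead demands a multi-scale argument whose optimization reproduces the same polynomial $b^r - 2rb + r - 1$ appearing in the upper bound, and carrying this through rigorously for $r = 2$ is the technical heart of the argument; the same framework is presumably extremal for $r \ge 3$ but appears beyond current techniques.
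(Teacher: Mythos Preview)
Your upper-bound argument is essentially the paper's: the same log-log-geometric coloring, the same containment of $\Sigma(A_i)$ in a union of intervals, and the same calculus optimization yielding $b^r - 2rb + r - 1 = 0$. (One quibble: you do not need the ``cross-interval sums are negligible for $b>2$'' hypothesis; containment of $\Sigma([m,n])$ in $[m,\binom{n+1}{2}]$ already bounds the log density from above, and the doubly-exponential growth makes earlier contributions irrelevant for any $b>1$.)

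The lower bound, however, has a genuine gap. Your plan fixes a scale sequence $M_j = M_0^{b_0^{\,j}}$ in advance and then pigeonholes to find a color winning at a positive proportion of scales. This cannot give the sharp constant: the adversary is free to place the color transitions wherever it likes, so the ``correct'' scale ratios are dictated by the coloring, not chosen by you. If you impose your own $b_0$-spaced scales, the winning color may cover only a fraction of each interval $(M_j,M_{j+1}]$, and the resulting subset-sum interval is shorter than $[M_j,\tfrac12 M_{j+1}^2]$; the bookkeeping then no longer collapses to the clean one-variable expression $(1-z/2)/(1-z^2)$. Conversely, if you let the scales be adversary-determined, pigeonhole only tells you one color wins half the time, and you are left with an infinite sequence of free ratio parameters that the adversary optimizes over jointly --- not a ``two-variable optimization.''

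The paper's actual route is quite different. It first proves (using a structural subset-sum theorem) that on every dyadic block $[N,eN)$ some color's subset sums already fill $[CN,C'N^2]$, and uses this to define an auxiliary coloring $\alpha:\mathbb{N}\to\{1,2\}$ on the log scale. After a reduction to the case where $\alpha$ alternates in long runs with breakpoints $H_0<H_1<\cdots$, it sets $z_n=(H_{n-1}-1)/(H_n-1)$ and shows the relevant densities satisfy the recursion $(\bar a_n,\bar b_n)=(\bar b_{n-1}z_n+1-z_n/2,\ \bar a_{n-1}z_n)$. The adversary now controls the entire sequence $(z_n)\in[0,1/2]^{\mathbb N}$, and the crux is to show this orbit cannot remain in $[0,f_2-\epsilon]^2$. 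The paper does this by proving the set of admissible states is convex and closed, building a continuous self-map on it, and invoking the Brouwer fixed-point theorem; the fixed-point equation is exactly $a=(1-z/2)/(1-z^2)\ge f_2$, a contradiction. Nothing like this dynamical/fixed-point step appears in your outline, and your final paragraph correctly senses but does not supply the missing idea.
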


We start with the upper bound, which is comparatively simple, following as it does from an appropriate generalization of Erd\H{o}s' coloring.
Indeed, fix an integer $r \geq 2$ and a real number $b > 1$ 
and consider the $r$-coloring of the positive integers where $n$ is given the value of $\lfloor \log_b \log n \rfloor$ taken modulo $r$. 
Erd\H{o}s' coloring mentioned earlier is essentially the special case where $r=2$ and $b=4$. 
Using the observation that the set of non-zero subset sums of the interval $[m, n]$ is contained in the interval $[m, \binom{n+1}{2}]$, 
it is easily checked that the upper logarithmic density of the set of subset sums of each color class is at most
$$\delta_r(b)=\left(1-\frac{1}{2b}\right)(1+b^{-r}+b^{-2r}+\cdots)=\left(1-\frac{1}{2b}\right)\left(1-b^{-r}\right)^{-1}.$$
Since $c_r \leq \delta_r(b)$ for any $b > 1$, we wish now to minimize $\delta_r(b)$. To this end, note that the derivative 
of $\delta_r(b)$ with respect to $b$ is
$$\delta'_r(b)=\frac{1}{2b^2}\left(1-b^{-r}\right)^{-1}-\left(1-\frac{1}{2b}\right)rb^{-r-1}\left(1-b^{-r}\right)^{-2}.$$
The minimum value of $\delta_r(b)$ occurs when this equals zero or, simplifying, when $b^r-2rb+r-1=0$. By Descartes' rule of signs, this polynomial has at most two positive roots and it is easily checked that there are precisely two roots, one lying between $0$ and $1$ and the other lying above $1$, thus completing the proof of the claimed upper bound.

We now turn our attention to our main contribution, the proof of the lower bound for $c_2$, which ultimately relies on an application of the Brouwer fixed-point theorem. We begin by proving a crucial lemma about monochromatic subset sums which may be of independent interest.

\section{Intervals of monochromatic subset sums}

In this section, we use a result from our recent paper~\cite{CFP} to prove the following key lemma on subset sums, which will be important in the proof of the lower bound for $c_2$. We note that a weaker version of this lemma, from which the bound $c_r \geq 1/2$ easily follows, was previously claimed by Erd\H{o}s~\cite[Theorem 3]{Er82a}, though the proof of this statement was never published.

\begin{lem}\label{mainlem}
For every positive integer $r$, there are positive constants $C=C(r)$ and $C'=C'(r)$ such that the following holds. For every $N>0$ and every partition $\mathbb{N} \cap [N,eN)=A_1 \sqcup A_2 \sqcup \cdots \sqcup A_r$ into $r$ color classes,  
there is some $i \in [r]$ such that $\Sigma(A_i)$ contains all positive integers in $[CN,C'N^2]$. 
\end{lem}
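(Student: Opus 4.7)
The plan is to combine the pigeonhole principle with a structural result on subset sums of dense sets in short intervals, appealing to the authors' earlier work~\cite{CFP}. Since $|\mathbb{N} \cap [N, eN)| \geq (e-1)N - 1$, the pigeonhole principle immediately produces a color class $A_i$ with $|A_i| \geq c_r N$, where $c_r = (e-1)/(2r)$, say, provided $N$ is at least some constant depending on $r$ (the small $N$ case can be absorbed into the constants). Every element of $A_i$ lies in the short interval $[N, eN)$, so any two elements have ratio at most $e$.

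Next I would apply a structural result, presumably the main result of~\cite{CFP}, of the following flavor: if $A \subseteq [N, eN) \cap \mathbb{N}$ has $|A| \geq cN$, then $\Sigma(A)$ contains all integers in some interval $[CN, C'N^2]$, where $C, C'$ depend only on $c$. If such a result is not directly available in this integer-perfect form, I would prove it by analyzing the level sets $\Sigma_k(A_i) = \{\sum_{s \in S} s : S \subseteq A_i, |S| = k\}$ for each $k$. Each $\Sigma_k(A_i)$ sits inside $[kN, ekN]$; a Sárközy/Szemerédi--Vu-style argument shows that for $k_0 \leq k \leq |A_i|/2$ (with $k_0$ a constant), $\Sigma_k(A_i)$ contains an arithmetic progression of length $\Theta(kN)$, and this AP fills out to the full integer interval $[kN + o(kN), ekN - o(kN)]$ once one knows the common difference equals $1$. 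The intervals for $k$ and $k+1$ overlap as soon as $(k+1)N \leq ekN - o(kN)$, i.e., once $k$ exceeds an absolute constant; taking the union over $k$ in the valid range yields a single integer interval of the form $[CN, C'N^2]$.

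The main obstacle is producing a genuine interval of \emph{integers} rather than merely an arithmetic progression, i.e., forcing common difference $1$. This is a gcd issue: if $\gcd(A_i) = d > 1$ then $|A_i| \leq (e-1)N/d$, which combined with $|A_i| \geq c_r N$ forces $d \leq 2r$. To handle this, observe that all non-multiples of any such $d$ in $[N, eN)$ lie in the union of the remaining $r-1$ color classes, so by iterating the pigeonhole we can pass to a color class $A_j$ whose elements meet a positive proportion of residue classes modulo every bad $d \leq 2r$; once $\gcd(A_j)$ is controlled, the doubling argument pushes the common difference of the AP in $\Sigma_k(A_j)$ down to $1$.

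In short, the plan is: (1) pigeonhole to secure a dense color class; (2) invoke~\cite{CFP} (or reprove it via level-set analysis) to get a long AP in $\Sigma_k(A_i)$ for each $k$; (3) use a gcd/residue argument, possibly after replacing $i$ by another color, to promote the AP to an interval of integers; (4) union the level-set intervals over $k$ to obtain the claimed interval $[CN, C'N^2]$. Step (3), converting an AP into an integer interval, is the step I would expect to require the most care.
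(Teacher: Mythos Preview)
Your skeleton---pigeonhole to a dense class, apply a Szemer\'edi--Vu/CFP-type structural theorem, then promote the resulting progression to a genuine interval---matches the paper's, and you correctly flag step~(3) as the crux. But the fix you propose for~(3) does not work as written, and the missing idea is precisely where the paper's argument differs.

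The gap is that controlling $\gcd(A_j)$ is not the right invariant. The structural theorems you invoke produce an arithmetic progression whose common difference $d$ reflects where \emph{almost all} of $A_j$ lives modulo~$d$, not $\gcd(A_j)$. A set with $\gcd$ equal to~$1$ can still have all but a handful of elements divisible by~$2$, and then the progression in $\Sigma_k(A_j)$ will have even common difference; one stray odd element does not rescue you. So your proposed iteration---pass to another color class when $\gcd(A_i)>1$---is attacking the wrong obstruction, and ``once $\gcd(A_j)$ is controlled, the doubling argument pushes the common difference down to~$1$'' is not justified.

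The paper resolves this by \emph{pre-filtering before} the pigeonhole step: restrict to the set $X\subseteq[N,eN)$ of integers with no prime factor at most $r^2$. Mertens gives $|X|\gg N/\log r$, so some $A_i\cap X$ is still dense. Now apply the CFP structural theorem (their Theorem~6.1), which produces a $d\ge 1$ together with the guarantee that all but $O(N/|A|)$ elements of $A$ are multiples of~$d$. If $d>1$ then $d\ge r^2$ (since no element of $X$ has a small prime factor), forcing $|A^*|\le eN/r^2$, which contradicts the density lower bound. Hence $d=1$ for free, and the progression is already an honest interval. This sidesteps your iteration entirely.

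A secondary difference: to stretch the interval out to length $\Theta(N^2)$, the paper does not union level sets $\Sigma_k$. Instead it sets aside half of $A_i\cap X$ at the outset, uses the other half to produce a short interval of length $\ge eN$ via CFP, and then feeds the reserved elements (and the rest of $A_i$) back in one at a time using Graham's elementary lemma: if $\Sigma(A)\supseteq[x,x+y)$ and $a\le y$ with $a\notin A$, then $\Sigma(A\cup\{a\})\supseteq[x,x+y+a)$. Since every reserved element is at most $eN\le y$, the interval grows to absorb $\sum_{a\in A_i\setminus A}a\gg N^2/(r\log r)$. Your level-set overlap idea is plausible but would require proving that each $\Sigma_k$ individually contains a long interval, which is strictly harder than what is needed.
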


To state the result from~\cite{CFP} that we need for the proof of this lemma, we introduce the notation
\[
\Sigma^{[k]}(A) = \left\{\sum_{s\in S}s\,\,:\,\,S\subseteq A, \, |S|\le k\right\}.
\]
That is, $\Sigma^{[k]}(A)$ is the set of subset sums formed by adding at most $k$ distinct elements of $A$.

\begin{thm}[Theorem 6.1 of \cite{CFP}] \label{thm:CFP} 
There exists an absolute constant $C>0$ such that the following holds. For any subset $A$ of $[n]$ of size $m\ge C\sqrt{n}$,
there exists $d\ge1$ such that, for $A'=\{x/d\,:\,x\in A, \, d|x\}$ and $k = 2^{50}n/m$, $\Sigma^{[k]}(A')$ contains an interval of length at least $n$.
Furthermore, 
\[
|A|-|A'|\le 2^{30}(\log n)^{3}+\frac{2^{30}n}{m}.
\]
\end{thm}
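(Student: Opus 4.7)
First, by pigeonhole some class $A_i$ satisfies $|A_i| \geq (e-1)N/r$. For $N$ large in terms of $r$, Theorem~\ref{thm:CFP} applies to $A_i \subseteq [\lceil eN\rceil]$ and yields a divisor $d \geq 1$, a set $A' = \{x/d : x \in A_i,\, d\mid x\}$ with $|A_i\setminus dA'| = O_r(\log^3 N)$, a threshold $k = 2^{50}eN/|A_i| = O_r(1)$, and an interval $[a,a+eN] \subseteq \Sigma^{[k]}(A')$. Since the multiples of $d$ in $[N,eN)$ number at most $(e-1)N/d$ while $|dA'| \geq (e-1)N/(2r)$, one obtains $d \leq 2r$. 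Scaling by $d$, this produces an arithmetic progression $P_0 \subseteq \Sigma(A_i)$ of common difference $d$ with at least $eN+1$ terms.

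Next, to extend $P_0$ to an AP $P$ of common difference (dividing) $d$ and range $\Omega_r(N^2)$, I plan an iterative scheme of applications of Theorem~\ref{thm:CFP}: partition $dA'$ into several disjoint sub-collections each of size $\gg\sqrt{eN}$, apply Theorem~\ref{thm:CFP} within each to obtain a further AP (whose common difference must divide $d$, since its support consists of multiples of $d$), and combine the resulting APs via Minkowski addition on their disjoint supports. After aligning divisors to a single $D \mid d$ (so that the APs genuinely combine into one AP of common difference $D$) and running enough rounds to exhaust essentially all of $dA'$, the accumulated AP has range $\Omega_r(\sum dA') = \Omega_r(N^2)$.

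To convert the AP $P$ into a genuine interval, I shift $P$ through all residues modulo $d$ using subset sums of $R := A_i \setminus dA'$, via the inclusion $\Sigma(A_i) \supseteq P + \Sigma(R)$. I claim that for at least one color $i$ the subset sums of $R_i$ realize every residue modulo $d_i$. Indeed, if $g_i := \gcd(R_i \cup \{d_i\}) > 1$ for every $i$, then each $A_i \subseteq g_i\mathbb{Z}$, and a density comparison forces $g_i \leq r$; the union $\bigcup_i g_i\mathbb{Z}$ would then need to cover $[N,eN) \cap \mathbb{N}$, which fails for $N$ large because any prime $p \in [N,eN)$ exceeds every $g_i$ and lies in no $g_i\mathbb{Z}$. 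Fixing such an $i$ and picking representatives $s_\rho \in \Sigma(R)$ with $s_\rho \equiv \rho \pmod d$ for each $\rho \in \mathbb{Z}/d\mathbb{Z}$, the union $\bigcup_\rho (P + s_\rho) \subseteq \Sigma(A_i)$ is a true interval of length $\Omega_r(N^2)$; since $\max_\rho s_\rho \leq \sum R = O_r(N\log^3 N) \ll N^2$, this interval contains $[CN, C'N^2]$ for suitable $C = C(r)$ and $C' = C'(r)$.

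The principal obstacle is extending $P_0$ to $P$ of quadratic length, since a single application of Theorem~\ref{thm:CFP} only provides a length-$eN$ piece. The partition-and-iterate scheme must carefully reconcile the divisors produced in each round (so that the pieces assemble into one AP of a single common difference) and keep the supports of different rounds disjoint so that the Minkowski sum of subset sums is legitimate. A subsidiary difficulty is ruling out the pathological case in which the chosen color $i$ fails to have $\Sigma(R_i) \bmod d_i = \mathbb{Z}/d_i\mathbb{Z}$; the covering argument above handles this by using the freedom to choose $i$ appropriately after running the CFP analysis on every color class.
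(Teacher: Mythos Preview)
Your proposal is not a proof of Theorem~\ref{thm:CFP} at all. That theorem is a structural result about a \emph{single} set $A\subseteq[n]$, with no coloring involved, and it is quoted here from~\cite{CFP} without proof. Your argument instead \emph{uses} Theorem~\ref{thm:CFP} as a black box and is manifestly an attempt to prove Lemma~\ref{mainlem}; I will compare it to the paper's proof of that lemma.

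The paper sidesteps both of your ``principal obstacles'' with a single device: before pigeonholing, it restricts attention to the set $X\subseteq[N,eN)$ of integers with no prime factor $\le r^2$, which still has size $\gg N/\log r$. When Theorem~\ref{thm:CFP} is applied to (half of) $A_i\cap X$, the resulting divisor $d$ is forced to equal~$1$, since $d>1$ would imply $d>r^2$ and hence too few multiples of $d$ in $[N,eN)$. Thus one gets an honest interval of length $eN$ immediately, with no residue-filling needed. For the extension to quadratic length, the paper applies Theorem~\ref{thm:CFP} only to \emph{half} of $A_i\cap X$ and then feeds the other half (still of size $\gg N/(r\log r)$) into Graham's elementary Lemma~\ref{lem:verysimple}, which extends the interval to $[CN,C'N^2]$ in one stroke. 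No iteration of Theorem~\ref{thm:CFP} is required.

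By contrast, your route has genuine gaps. In the partition-and-iterate step, each fresh application of Theorem~\ref{thm:CFP} to a sub-collection $S\subseteq dA'$ of size $m'\asymp\sqrt{N}$ yields its own divisor $d'$; since $S\subseteq d\mathbb{Z}$, one gets $d\mid d'$ (so the AP has common difference a \emph{multiple} of $d$, not a divisor as you wrote), but the only bound available is $d'\lesssim N/m'\asymp\sqrt{N}$. Minkowski-summing APs with such disparate, potentially large common differences does not produce a single long AP of common difference $d$, and you give no mechanism for ``aligning divisors to a single $D\mid d$''. In the residue-filling step, your covering contradiction requires running the CFP analysis on \emph{every} color class to define $d_j$ and $R_j$, but classes with $|A_j|<C\sqrt{eN}$ are too small for Theorem~\ref{thm:CFP} to apply, so $g_j$ is undefined for them and the inclusion $[N,eN)\subseteq\bigcup_j g_j\mathbb{Z}$ is not established. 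The paper's pre-filtering trick eliminates both difficulties at once.
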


We will also use the following observation of Graham~\cite{G}. Part (ii), which is the part we will use, follows from the elementary part (i) by induction.

\begin{lem}[Graham~\cite{G}]\label{lem:verysimple}
Let $A$ be a set such that $\Sigma(A)$ contains all integers in the interval $[x,x+y)$.
\begin{enumerate}
\item If $a$ is a positive integer with $a \leq y$ and $a\notin A$, then $\Sigma(A \cup \{a\})$ contains all integers in the interval $[x,x+y+a)$. 
\item If $a_1,\ldots,a_s$ are positive integers such that $a_i \leq y+\sum_{j<i} a_j$ and $a_i \notin A$ for $i=1,\ldots,s$, then  
$\Sigma(A \cup \{a_1,a_2,\dots,a_s\})$ contains all integers in the interval $[x,x+y+\sum_{i=1}^s a_i)$.
\end{enumerate}
\end{lem}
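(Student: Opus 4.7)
The plan is to prove part (i) directly from the definition of subset sums, then deduce part (ii) by induction on $s$, feeding part (i) into itself each time.

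For part (i), I would start from the decomposition $\Sigma(A \cup \{a\}) = \Sigma(A) \cup (\Sigma(A) + a)$, where $\Sigma(A) + a$ denotes the translate $\{s + a : s \in \Sigma(A)\}$ (this holds since $a \notin A$, so every subset of $A \cup \{a\}$ either omits $a$ or contains $a$ alongside some subset of $A$). By hypothesis the first piece covers $[x, x+y)$, and translating yields that the second piece covers $[x+a, x+y+a)$. The union covers $[x, x+y+a)$ provided the right endpoint of the first interval is at least the left endpoint of the second, i.e., $x + y \ge x + a$, which is exactly the hypothesis $a \le y$. This gives part (i).

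For part (ii), I would induct on $s$. The base $s = 0$ is vacuous (the statement reduces to the hypothesis), and $s=1$ is precisely part (i). For the inductive step, set $A_{s-1} = A \cup \{a_1, \ldots, a_{s-1}\}$; by the inductive hypothesis applied to $a_1, \ldots, a_{s-1}$, the set $\Sigma(A_{s-1})$ contains every integer in $[x, x + y + \sum_{j < s} a_j)$. I would then apply part (i) with the roles of $A$ and $y$ played by $A_{s-1}$ and $y + \sum_{j<s} a_j$, and with the new element $a_s$. The hypothesis $a_s \le y + \sum_{j < s} a_j$ supplies the inequality needed in part (i), and $a_s \notin A_{s-1}$ follows from the assumption that the $a_i$ are elements not in $A$ (and, implicitly, distinct from each other; otherwise repetitions would collapse under the union and the claimed extension would be meaningless). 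Part (i) then extends the covered interval to $[x, x + y + \sum_{j \le s} a_j)$, completing the induction.

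There is essentially no hard step here: the only substantive point is the overlap condition $a \le y$ in part (i), which is exactly what prevents a gap when the translated interval is glued to the original. The induction in part (ii) is a routine bookkeeping exercise, since after absorbing $a_1, \ldots, a_{s-1}$ the effective length of the covered interval has grown to $y + \sum_{j<s} a_j$, which is precisely the quantity that must dominate $a_s$ to keep the chain going.
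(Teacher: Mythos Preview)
Your proposal is correct and matches the paper's treatment: the paper does not give a detailed proof but simply remarks that part (ii) follows from the elementary part (i) by induction, and your argument supplies exactly those details. The only minor point is that the distinctness of the $a_i$ is indeed an implicit assumption in the statement (as you note), but this does not affect the correctness of your proof.
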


We are now in a position to prove Lemma~\ref{mainlem}. Recall that a \emph{homogeneous progression} is an arithmetic progression $a, a+ d, \dots, a+kd$, where $d$ divides $a$ and, hence, every other term in the progression.

\begin{proof}[Proof of Lemma \ref{mainlem}]
Suppose, without loss of generality, that $r$ is sufficiently large and $N$ is sufficiently large in terms of $r$. Let $X$ be the set of elements of $\mathbb{N}\cap [N,eN)$ which do not have any prime factor at most $r^2$. Let $W=\prod_{p\le r^2} p$ and note that the number of integers in an interval of length $\ell$ which are coprime to $W$ is at least $(1-o_\ell(1))\ell \phi(W)/W$, where $\phi$ is the Euler totient function. By Merten's third theorem, $\phi(W)/W = (e^{-\gamma}+o(1))/\log(r^2) \ge 1/(3.9 \log r)$ for $r$ sufficiently large, where $\gamma$ is the Euler--Mascheroni constant. Thus, as $N$ is sufficiently large in terms of $r$, we have 
\[|X| \ge (e-1)N\cdot 1/(4\log r) \ge N/(4\log r).\]
Therefore, by the pigeonhole principle, there exists an index $i$ such that $|A_i\cap X| \ge N/(4r\log r)$. Fix such an $i$ and let $A$ be an arbitrary subset of $A_i\cap X$ of size $N/(8r\log r)$. 

By Theorem \ref{thm:CFP}, there exists $d\ge 1$ and a subset $A^*$ of $A$ consisting of multiples of $d$ such that 
$$|A^*| \ge |A|-2^{30}(\log (eN))^{3} - \frac{2^{30}eN}{|A|} \ge |A|/2 \ge N/(16r\log r)$$ 
and, for $k=2^{50}eN/|A|$, $\Sigma^{[k]}(A^*)$ contains a homogeneous progression of common difference $d$ and length at least $eN$. 
If $d>1$, then, since $A$ does not contain multiples of any prime $p\le r^2$, we must have $d\ge r^2$. But then $|A^*| \le 1+eN/r^2 < N/(16r\log r)$, a contradiction. We must therefore have that $d=1$ and, hence, $\Sigma^{[k]}(A^*)$ contains an interval $I$ of length at least $eN$. 

Since $k=2^{50}eN/|A| \le 2^{55}r\log r$, the smallest element of $I$ is at most $2^{55}r\log r \cdot eN < 2^{57}Nr \log r$. Therefore, by Lemma \ref{lem:verysimple}, we see that $\Sigma(A^*\cup (A_i\setminus A))$ contains all integers between $2^{57}Nr \log r$ and $\sum_{x\in A_i \setminus A} x \ge N^2/(8r\log r)$, as required. 
\end{proof}

\begin{remark}
Alternatively, one can prove Lemma~\ref{mainlem} by using Theorem 7.1 from Szemer\'edi and Vu's paper~\cite{SV2}. This result says that there is a constant $C > 0$ such that if $A$ is a subset of $[n]$ of size $m \geq C\sqrt{n}$ and $k\ge Cn/m$, then $\Sigma^{[k]}(A)$ contains an arithmetic progression of length at least $n$. If we apply this result rather than Theorem~\ref{thm:CFP} to the set $A$, we find an arithmetic progression rather than an interval in $\Sigma(A)$. However, we may then use the fact that the elements of $A_i \cap X$ do not have small factors to expand this arithmetic progression to an interval. The remainder of the proof then proceeds as before.
\end{remark}

\section{Proof of the lower bound for $c_2$} \label{sec:lower}

Suppose $\mathbb{N} = A_1 \sqcup \cdots \sqcup A_r$ is a partition of the positive integers into $r$ color classes.  Given this partition, we build an auxiliary $r$-coloring $\alpha:\mathbb{N} \rightarrow [r]$ of the positive integers, where we set $\alpha(n)=i$ for some $i$ such that the color class $A_i$ of integers colored by $i$ has the property that $\Sigma(A_i)$ contains all positive integers in the interval $[CN,C'N^2]$, where $N = e^n$ and $C$ and $C'$ are as in Lemma~\ref{mainlem}. Note that at least one choice for $i$ always exists by Lemma~\ref{mainlem}. 

From this auxiliary coloring $\alpha$, we build another auxiliary coloring $\phi : \mathbb{N} \rightarrow 2^{[r]}$ of the positive integers, where each positive integer now receives a set of at least one color. Explicitly, we place $i$ in $\phi(n)$ if and only if there is some $n/2 \leq j \leq n$ such that $\alpha(j)=i$. Let $S(\phi,i)$ be the set of positive integers $n$ such that $i \in \phi(n)$. The next lemma shows that the upper logarithmic density of $\Sigma(A_i)$ is at least the upper density of $S(\phi,i)$. 

\begin{lem} \label{lem:denred}
The upper logarithmic density of $\Sigma(A_i)$ is at least the upper density of $S(\phi,i)$. 
\end{lem}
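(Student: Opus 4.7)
The plan is to show $\bar{d}_\ell(\Sigma(A_i)) \geq \delta := \bar{d}(S(\phi,i))$ by constructing an explicit large subset of $\Sigma(A_i)$ from the intervals furnished by the definition of $\alpha$. Fix $\epsilon > 0$, choose $M$ arbitrarily large with $|S(\phi,i) \cap [1,M]| \geq (\delta-\epsilon)M$, and set $X = e^M$. Writing $J(M) := \{j \in \mathbb{N} : j \leq M, \alpha(j) = i\}$, the definition of $\alpha$ guarantees $[Ce^j, C'e^{2j}] \subseteq \Sigma(A_i)$ for every $j \in J(M)$, where $C, C'$ are the constants from Lemma~\ref{mainlem}. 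I take $U := \bigcup_{j \in J(M)} [Ce^j, \min(C'e^{2j}, X)] \cap \mathbb{N}$, so $U \subseteq \Sigma(A_i) \cap [1,X]$ and the task reduces to lower-bounding $\frac{1}{M}\sum_{a \in U} 1/a$.

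Using the standard estimate $\sum_{a=A}^{B} 1/a = \log(B/A) + O(1/A)$, I replace $\sum_{a \in U} 1/a$ by the Lebesgue measure of $\log U := \bigcup_{j \in J(M)} [j + \log C, \min(2j + \log C', M)]$, with an error $O(\#\text{components of }U / C)$ that is negligible since $C$ is a large constant depending only on $r$. I then compare $\log U$ to the closely related union $T := \bigcup_{j \in J(M)} [j, \min(2j, M)]$, whose integer points are exactly $S(\phi,i) \cap [1,M]$ by the definition of $\phi$ (since $n \in S(\phi,i)$ iff there exists $j \in [n/2,n]$ with $\alpha(j) = i$, iff $n \in [j, 2j]$ for some $j \in J(M)$). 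In the regime of interest we have $\log C > 0 > \log C'$, so each defining interval of $\log U$ is obtained from the corresponding one in $T$ by shrinking on both sides by a total of $D := \log C - \log C' = O(\log r)$.

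The key technical step is to show $\text{Leb}(T) - \text{Leb}(\log U) = o(M)$. A careless per-$j$ accounting gives an unacceptable loss of $O(|J(M)| \log r)$, which could be as large as $O(M \log r)$; instead, I would argue that the total loss is bounded by $O(\log r)$ times the number of connected components of $\log U$ (not the number of $j$'s), which accounts both for the per-component endpoint shrinkage and for the extra gaps when a $T$-component fractures into several $\log U$-components. Both $T$ and $\log U$ have at most $O(\log M)$ components, because any boundary between consecutive $j_k < j_{k+1}$ in $J(M)$ forces $j_{k+1} > 2 j_k$ (for $T$) or the near-doubling condition $j_{k+1} > 2j_k - D$ (for $\log U$), and such near-geometric doublings can occur at most $O(\log M)$ times for $j_k \in [1,M]$. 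Hence the total loss is $O(\log r \cdot \log M) = o(M)$, giving $\text{Leb}(\log U) \geq (\delta-\epsilon)M - o(M)$ and therefore $d_\ell(\Sigma(A_i); X) \geq \delta - \epsilon - o(1)$. Letting $M \to \infty$ along the chosen subsequence and then $\epsilon \to 0$ yields the lemma. The main obstacle is precisely this component-counting step: the bound must be tight (as witnessed when $\alpha$ is constant), so the naive per-$j$ accounting is not merely loose but off by a full factor of $\log r$, and the refined estimate via the (logarithmically small) number of components is essential.
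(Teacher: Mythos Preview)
Your argument is correct and follows essentially the same route as the paper: both observe that the log-scale intervals $[j+\log C,\,2j+\log C']$ coming from $\Sigma(A_i)$ differ from the intervals $[j,2j]$ defining $S(\phi,i)$ only by bounded shifts, and both bound the resulting loss by a constant times the number of connected components, which is $o(M)$. The paper packages this comparison via an auxiliary shifted coloring $\tilde{\phi}$ and bounds the component count by $O(\sqrt{t})$ using a crude length estimate ($|I_k|\ge k-3\gamma$), whereas you work directly in log-scale and obtain the sharper bound $O(\log M)$ via the near-doubling of successive component endpoints; either bound suffices.
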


\begin{proof}
Let $\gamma$ be a sufficiently large constant depending on $C$ and $C'$, where again $C$ and $C'$ are as in Lemma~\ref{mainlem}. Consider the coloring $\tilde{\phi}:\mathbb{N} \rightarrow 2^{[r]}$ such that $i$ is in $\tilde{\phi}(n)$ if and only if there is some $n/2+\gamma\le j\le n-\gamma$ such that $\alpha(j)=i$. Then, if $n \in S(\tilde{\phi},i)$, there exists $j \in [n/2+\gamma,n-\gamma]$ such that $\alpha(j)=i$ and so, by definition, $\Sigma(A_i)$ contains $[Ce^{j},C'e^{2j}]$. Hence, for $\gamma$ sufficiently large, $\Sigma(A_i)$ contains $[e^{n},e^{n+1}]$. Noting that $\sum_{e^n\le x<e^{n+1}} 1/x = 1 + O(e^{-n})$, we obtain that the upper logarithmic density of $\Sigma(A_i)$ is at least the upper density of $S(\tilde{\phi},i)$. 

It remains to prove that the upper density of $S(\tilde{\phi},i)$ is at least the upper density of $S(\phi,i)$. Partition the elements of $S(\tilde{\phi},i)$ into disjoint intervals $I_k$, so that $\min(I_{k+1}) > 1+\max(I_k)$ for any $k\ge 1$. Observe that $S(\tilde{\phi},i)$ is the union of intervals of the form $[j+\gamma,2j-2\gamma]$ where $\alpha(j)=i$. Thus, we must have $|I_k| \ge k-3\gamma$. Similarly, $S(\phi,i)$ is the union of intervals of the form $[j,2j]$ where $\alpha(j)=i$. Let $S_1(\phi,i)$ be the union of those intervals $[j,2j]$ with $\alpha(j)=i$ and $j \le 3\gamma$ and let $S_2(\phi,i) = S(\phi,i)\setminus S_1(\phi,i)$. Observe that if $x\in S(\phi,i) \setminus S(\tilde{\phi},i)$, then either $x\in S_1(\phi,i)$ or there exists $k$ such that $x\in [\min I_k-\gamma, \min I_k) \cup (\max I_k, \max I_k + 2\gamma]$. Let $t$ be a sufficiently large positive integer and let $\ell$ be the number of intervals $I_k$ intersecting $[t]$. We then have that $|(S(\phi,i) \cap [t]) \setminus (S(\tilde{\phi},i) \cap [t])| \le 3 \gamma (\ell+1) + (3\gamma+1)^2/2$, where we used that $|S_1(\phi,i)| \le \sum_{j\le 3\gamma}j < (3\gamma+1)^2/2$. Using that $|I_k| \ge k - 3\gamma$, we have $\ell + 1 \le 2\sqrt{t}$ for $t$ sufficiently large and, hence,  
\[
\frac{|S(\phi,i) \cap [t]|}{t}-\frac{|S(\tilde{\phi},i) \cap [t]|}{t} \le \frac{6\gamma}{\sqrt{t}} + \frac{(3\gamma+1)^2}{2t} \le \frac{12\gamma}{\sqrt{t}}. 
\]
Thus, 
\[
\limsup_{t\to \infty}\frac{|S(\phi,i) \cap [t]|}{t}=\limsup_{t\to \infty}\frac{|S(\tilde{\phi},i) \cap [t]|}{t}. \qedhere
\]
\end{proof}

The next lemma therefore completes the proof of the lower bound for $c_2$ by showing that, for $r = 2$, the upper density of $S(\phi,i)$ is at least $f_2 := \inf_{z\in [0,1)} \frac{1-z/2}{1-z^2} = (2+\sqrt{3})/4$ for either $i = 1$ or $2$. It is worth noting that, from this point on, the argument only depends on our choice for the auxiliary coloring $\alpha$ and not on the original coloring of $\mathbb{N}$. Thus, the following lemma holds true for the set-valued coloring $\phi$ derived from any coloring $\alpha:\mathbb{N}\to [r]$.

\begin{lem}\label{lem:r=2}
For $r=2$, the upper density of $S(\phi,1)$ or $S(\phi,2)$ is at least $f_2$. 
\end{lem}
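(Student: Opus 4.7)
The plan would be to pass to the logarithmic scale, where the dynamics governing the density of $S(\phi,i)$ becomes a tractable linear ODE, then to handle log-scale periodic colorings by direct calculation, and finally to reduce the general case to the periodic one by applying Brouwer's fixed-point theorem to a ``shadow'' return map. I would set $y = \log n$, $L = \log 2$, and let $\beta : [0, \infty) \to \{1, 2\}$ be the log-scale version of $\alpha$, then define
\[
\mu_i(y) = \mathbf{1}[\beta \equiv 3-i \text{ on } [y-L, y]], \qquad \Phi_i(Y) = \int_0^Y e^{y-Y} \mu_i(y) \, dy.
\]
The change of variables $n = e^y$ shows $|T_i \cap [1, t]|/t = \Phi_i(\log t) + o(1)$, where $T_i := \mathbb{N}\setminus S(\phi, i)$, so $\bar d(S(\phi, i)) = 1 - \liminf_Y \Phi_i(Y)$, and it suffices to prove $\min_{i \in \{1,2\}} \liminf_Y \Phi_i(Y) \leq q := 1 - f_2 = (2-\sqrt{3})/4$. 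Differentiating yields the ODE $\Phi_i' + \Phi_i = \mu_i$, with $\mu_1, \mu_2 \in \{0,1\}$, $\mu_1 + \mu_2 \leq 1$, and with $\mu_i(y) = 1$ demanding that $\beta$ be constantly $3-i$ on the preceding window $[y - L, y]$.

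For the periodic case, I would suppose $\beta$ is log-scale periodic with alternating blocks of lengths $\ell_1, \ell_2 \geq L$, set $u = e^{-\ell_1}$, $v = e^{-\ell_2}$, and solve the ODE over one period; this produces a unique limit cycle, and a short calculation yields
\[
\bar d(S(\phi,1)) = \frac{1-u/2}{1-uv}, \qquad \bar d(S(\phi,2)) = \frac{1-v/2}{1-uv}.
\]
Assuming without loss of generality that $u \geq v$, and using $uv \geq v^2$, one has $\bar d(S(\phi,2)) \geq (1 - v/2)/(1 - v^2) \geq \inf_{z \in [0, 1)} (1 - z/2)/(1 - z^2) = f_2$, the last infimum being attained at $z = 2 - \sqrt{3}$. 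Hence $\max_i \bar d(S(\phi, i)) \geq f_2$ in the periodic case.

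For a general coloring $\beta$, I would let $z_1 < z_2 < \cdots$ be its color-switching points, with block lengths $\ell_k$, colors $c_k$, and parameters $r_k := e^{-\ell_k}$; after absorbing blocks of length less than $L$ (which only decrease the $\Phi_i$'s without producing growth), I may assume $r_k \in (0, 1/2]$ throughout. The boundary states $(\alpha_k, \beta_k) := (\Phi_1(z_k), \Phi_2(z_k))$ satisfy an affine recursion determined by $(r_k, c_k)$. Supposing for contradiction that $\liminf_Y \Phi_i(Y) > q$ for both $i$, the $(\alpha_k, \beta_k)$ and consecutive parameter pairs $(r_{k-1}, r_k)$ are eventually confined to compact convex sets bounded away from the boundaries. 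I would extract a subsequence along which $(r_{2m}, r_{2m+1}) \to (u^*, v^*)$, and consider the continuous self-map $F$ on the compact convex set of admissible states that applies one full period of dynamics (a color-$1$ block followed by a color-$2$ block) with the limiting parameters $(u^*, v^*)$. Brouwer's fixed-point theorem then produces a fixed point of $F$, namely the steady state of a $2$-periodic shadow coloring with parameters $(u^*, v^*)$; the periodic analysis above gives $\min_i \min_Y \Phi_i^{\text{shadow}}(Y) \leq q$, and shadowing the original dynamics along the subsequence $\{z_{2m}\}$ would yield states of $\Phi_i$ for the original coloring arbitrarily close to this minimum, contradicting the assumption.

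The crux of the argument, and the hardest part to formalize, will be the Brouwer step: one must show that the fixed point of $F$ genuinely captures the $\liminf$ behavior of the original $\beta$, rather than just behavior along a distinguished subsequence. The key technical subtleties to address are estimating the shadowing error so the original trajectory indeed tracks the shadow along the subsequence, treating degenerate subsequential limits (block lengths approaching $0$ or $\infty$, or $r_k$ tending to $0$ or $1/2$), and managing the non-Markovian delay in $\mu_i$ by suitably enlarging the state space to encode recent color history.
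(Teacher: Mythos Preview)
Your proposal shares its large-scale architecture with the paper's proof---log-scale coordinates, reduction to long alternating blocks, an explicit computation in the periodic case yielding $(1-z/2)/(1-z^2)$, and an appeal to Brouwer for the general case---but the way you invoke Brouwer is genuinely different, and as you yourself flag, it does not close.

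Concretely: you extract a subsequential limit $(u^*,v^*)$ of the parameter pairs, define the one-period map $F$ with these fixed parameters on a ``compact convex set of admissible states,'' and apply Brouwer to $F$. But if ``admissible'' means the region $\{\Phi_1,\Phi_2>q+\delta\}$ forced by the contradiction hypothesis, then $F$ cannot map this set into itself: by your own periodic computation, the fixed point of $F$ has one coordinate $\le q$, so it lies \emph{outside} the set. Thus either Brouwer does not apply, or the set is something else you have not specified. Even granting a fixed point of $F$, the shadowing step is unjustified: convergence of parameters along a sparse subsequence gives no control over the actual states $(\alpha_{2m},\beta_{2m})$, since between subsequence indices the dynamics uses arbitrary other parameters and can drift anywhere in the admissible region. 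The fixed point of the shadow map and the orbit of the true system need have nothing to do with each other.

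The paper circumvents exactly this difficulty, and its mechanism is the main idea you are missing. Rather than fixing parameters and applying Brouwer to a single affine map, the paper lets $S_0\subseteq B=[0,f_2-\epsilon]^2$ be the set of states from which \emph{some} infinite sequence of parameters $z_k\in[0,1/2]$ keeps all iterates in $B$; nonemptiness comes directly from the assumed bad coloring. The substantial work is proving $S_0$ is convex and closed, and then that the selection map $t:S_0\to S_0$ which picks the \emph{largest} admissible $z$ is continuous. Brouwer applied to $t$ yields a fixed point $(a_0,b_0)\in S_0$ with $a_0=(1-z/2)/(1-z^2)\ge f_2$, contradicting $(a_0,b_0)\in B$. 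No shadowing or subsequence extraction is needed: the parameter $z$ at the fixed point is produced by the argument, not imported from the original coloring.
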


\begin{proof}
Suppose, for the sake of contradiction, that there exists some $\epsilon>0$ and a coloring $\alpha$ such that $S(\phi,1)$ and $S(\phi,2)$ each have density at most $f_2-\epsilon$ in $[n]$ for all $n$ sufficiently large. Without loss of generality, suppose that $\alpha(1)=1$. Define $H_1$ to be the first integer with $\alpha$-color different from $1$ and, for each $i \ge 2$, define $H_i$ to be the first integer greater than $H_{i-1}$ with $\alpha$-color different from $H_{i-1}$. 

First, we claim that there exists such a coloring with the property that $H_{i+2} > 2(H_{i+1}-1)$ for all $i \geq 0$. Indeed, suppose that $i$ is the smallest non-negative integer for which $H_{i+2} \le 2 (H_{i+1}-1)$. Consider a new coloring $\alpha'$ where we change the $\alpha$-color of every integer in $[H_{i+1},H_{i+2})$ to $\alpha(H_i)$, while fixing the color of all other integers. Let $\phi'$ be the coloring associated to $\alpha'$. 
We can verify that $\phi'(x)=\phi(x)$ for all $x\le H_{i+1}-1$ and $x > 2(H_{i+2}-1)$, while 
$\phi'(x) \subseteq \phi(x) = \{\alpha(H_i),\alpha(H_{i+1})\}$ for $x\in [H_{i+1},2(H_{i+2}-1)]$. Thus, $\phi'(x) \subseteq \phi(x)$ for all $x$, so the coloring $\alpha'$ also has the property that $S(\phi',1)$ and $S(\phi',2)$ each have density at most $f_2-\epsilon$ in $[n]$ for all $n$ sufficiently large.

It therefore suffices to consider the case where there exist $1=H_0<H_1<\cdots$ such that $H_i \ge 2H_{i-1}-1$ for all $i\ge 1$ and all elements in $[H_j,H_{j+1})$ receive color $(j+1)\pmod 2$. 
Note that $1 \in \phi(x)$ if and only if $x\in \bigcup_{j \equiv 0\pmod 2} [H_j,2(H_{j+1}-1)]$ and $2\in \phi(x)$ if and only if $x\in \bigcup_{j \equiv 1\pmod 2} [H_j,2(H_{j+1}-1)]$. Let $\bar{a}_n$ be the density of $S(\phi,n\pmod 2)$ in the interval $[2(H_n-1)]$. Then 
\begin{align*}
\bar{a}_n &= \frac{\sum_{i\equiv n\,\, (\textrm{mod } 2),i\le n}(2(H_i-1) - (H_{i-1}-1))}{2(H_n-1)}\\
&=\frac{2\sum_{i\equiv n\,\, (\textrm{mod } 2),i\le n}(H_i-1) - \sum_{i\not\equiv n\,\,(\textrm{mod } 2),i\le n}(H_i-1)}{2(H_n-1)}.
\end{align*}

Let $z_n = \frac{H_{n-1}-1}{H_n-1} \le \frac{1}{2}$ and $\bar{b}_n = \bar{a}_{n-1}z_n$, noting that $\bar{b}_n$ is at most the density of $S(\phi,n-1\pmod 2)$ in the interval $[2(H_n-1)]$. 
Observe that 
\begin{align*}
\bar{a}_n &= \frac{2\sum_{i\equiv n\,\, (\textrm{mod } 2),i\le n}(H_i-1) - \sum_{i\not\equiv n\,\, (\textrm{mod } 2),i\le n}(H_i-1)}{2(H_n-1)}\\
&= \frac{2\sum_{i\equiv n\,\, (\textrm{mod } 2),i\le n-2}(H_i-1) - \sum_{i\not\equiv n\,\, (\textrm{mod } 2),i\le n-2}(H_i-1)}{2(H_{n-2}-1)} \cdot \frac{H_{n-2}-1}{H_n-1} + \frac{2(H_n-1)-(H_{n-1}-1)}{2(H_n-1)}\\
&= \bar{a}_{n-2} z_{n-1}z_n + 1 - z_n/2\\
&= \bar{b}_{n-1}z_n + 1 - z_n/2.
\end{align*} 
Thus, 
\[
(\bar{a}_n,\bar{b}_n) = \left(\bar{b}_{n-1}z_n+1-z_n/2,\bar{a}_{n-1}z_n\right).
\]
Let $B = [0,f_2 - \epsilon]^2$. 
For $S\subseteq [0,1]^2$, define 
\[
g(S) = \{(bz+1-z/2,az)\,:\,(a,b)\in S, z\in [0,1/2]\} \cap B.
\]
Since there is a coloring such that both $S(\phi,i)$ have density at most $f_2-\epsilon$ in $[n]$ for all $n$ sufficiently large, letting $(a,b) = (\bar{a}_t,\bar{b}_t)$ for $t$ sufficiently large, we have, by induction, that $(\bar{a}_{t+k},\bar{b}_{t+k}) = (\bar{b}_{t+k-1}z_{t+k}+1-z_{t+k}/2,\bar{a}_{t+k-1}z_{t+k}) \in g^{k}(S)$ for all $k\ge 1$. Thus, there is a point $(a,b)$ such that $g^k(\{(a,b)\})$ is non-empty for all $k\ge 1$. 
Let $S_0$ be the set of points $(a,b) \in B$ such that $g^k(\{(a,b)\})$ is non-empty for all $k$. For each natural number $K$, let $S_K$ be the set of points $x_0=(a_0,b_0)\in B$ for which there exists $z_k\in [0,1/2]$ for each $1\le k\le K$ such that $x_k = (a_k,b_k) = (b_{k-1}z_k+1-z_k/2,a_{k-1}z_k) \in B$. Observe that $S_0 = \bigcap_{K\ge 1}S_K$. In the following claim, we show that $S_0$ is convex and closed. 

\begin{claim} \label{clm:cnvx}
$S_0$ is convex and closed. 
\end{claim}

\begin{proof}
Since $S_0 = \bigcap_{K\ge 1}S_K$, it suffices to show that $S_K$ is convex and closed for each $K$. 

First, we show that $S_K$ is convex. Indeed, assume that $x_0=(a_0,b_0)$ and $x_0'=(a_0',b_0')$ are in $S_K$ and $y_0=(c_0,d_0)=\alpha_0 x_0+(1-\alpha_0)x_0'$ for some $\alpha_0 \in [0,1]$. As $x_0$ and $x_0'$ are in $S_K$, we have that $x_0,x_0'\in B$ and there exist $z_k$ and $z_k'$ in $[0,1/2]$ for each positive integer $k\le K$ such that $x_k = (a_k,b_k) = (b_{k-1}z_k+1-z_k/2,a_{k-1}z_k)$ and $x_k' = (a_k',b_k') = (b_{k-1}'z_k'+1-z_k'/2,a_{k-1}'z_k')$ are in $B$. Since $B$ is convex, $y_0\in B$. We will show by induction that for each positive integer $k\le K$ there exists $w_k\in [0,1/2]$ such that $y_k=(c_k,d_k)=(d_{k-1}w_k+1-w_k/2,c_{k-1}w_k)$ is a convex combination of $x_k$ and $x_k'$ and, hence, $y_k\in B$. This shows that $y_0 \in S_K$. 

We will need the following simple observation: any points $t$, $u$, $u'$, $v$, $\tilde{u}$ and $\tilde{u}'$ in $\mathbb{R}^2$ such that $v$ is on the segment between $u$ and $u'$, $\tilde{u}$ is on the segment between $t$ and $u$ and $\tilde{u}'$ is on the segment between $t$ and $u'$ have the property that the segment between $\tilde{u}$ and $\tilde{u}'$ intersects the segment between $t$ and $v$. 

The set of points $(b_{k-1}z+1-z/2,a_{k-1}z)$ for $z\in [0,1/2]$ is a segment with one endpoint at $(1,0)$ and the other endpoint at $\frac{1}{2}(b_{k-1}-1/2,a_{k-1})+(1,0)$. Similarly, the set of points $(b_{k-1}'z+1-z/2,a_{k-1}'z)$ is a segment with one endpoint at $(1,0)$ and the other endpoint at $\frac{1}{2}(b_{k-1}'-1/2,a_{k-1}')+(1,0)$. Noting that $(a,b)\mapsto \frac{1}{2}(b-1/2,a) + (1,0)$ is a linear map, we have, since $(c_{k-1}, d_{k-1})$ is a convex combination of $(a_{k-1}, b_{k-1})$ and $(a_{k-1}',b_{k-1}')$ by the induction hypothesis, that the point $\frac{1}{2}(d_{k-1}-1/2,c_{k-1})+(1,0)$ is a convex combination of $\frac{1}{2}(b_{k-1}-1/2,a_{k-1})+(1,0)$ and $\frac{1}{2}(b_{k-1}'-1/2,a_{k-1}')+(1,0)$. Therefore, by the observation above, for any $z,z' \in [0,1/2]$, the segment through $(b_{k-1}z+1-z/2,a_{k-1}z)$ and $(b_{k-1}'z'+1-z'/2,a_{k-1}'z')$ intersects the segment of points $(d_{k-1}z''+1-z''/2,c_{k-1}z'')$ with $z''\in [0,1/2]$. Thus, there exists $w_k\in [0,1/2]$ such that $y_k=(d_{k-1}w_k+1-w_k/2,c_{k-1}w_k)$ is a convex combination of $x_k$ and $x_k'$, as required.

Next, we verify that $S_K$ is closed. Let $x_0^{i}$ be a sequence of points in $S_K$ converging to $x_0$. Then we have $x_0\in B$, since $x_0^{i}\in B$ for all $i$ and $B$ is closed. Since $x_0^{i}\in S_K$, there exists $z_k^{i}\in [0,1/2]$ for $1\le k\le K$ such that $x_k^{i}=(a_k^{i},b_k^{i})= (b_{k-1}^iz_k^i+1-z_k^i/2,a_{k-1}^iz_k^i)$ is in $B$. Since $[0,1/2]^{K}$ is compact, the Bolzano--Weierstrass Theorem implies that there exists a subsequence $i_j$ such that $(z_k^{i_j})_{k\le K}$ converges to a limit $(z_k)_{k\le K}$. For $1\le k\le K$, define $x_k= (a_k,b_k) = (b_{k-1}z_k+1-z_k/2,a_{k-1}z_k)$ inductively. We now prove by induction on $0\le k\le K$ that $x_k = \lim_{j\to \infty} (a_k^{i_j},b_k^{i_j})$. Indeed, this holds for $k=0$. Furthermore, if $x_{k-1} = \lim_{j\to \infty} (a_{k-1}^{i_j},b_{k-1}^{i_j})$, then, as $\lim_{j\to \infty} z_k^{i_j} = z_k$, we have 
$$\lim_{j\to \infty} (a_k^{i_j}, b_k^{i_j}) =  \lim_{j\to \infty} (b_{k-1}^{i_j}z_k^{i_j}+1-z_k^{i_j}/2,a_{k-1}^{i_j}z_k^{i_j}) = (b_{k-1}z_k+1-z_k/2,a_{k-1}z_k) =x_k,$$
as required. Since $x_k^{i_j}\in B$ for all $j$ and $B$ is closed, we have that $x_k \in B$ for all $k\le K$. In particular, $x_0 \in S_K$. Hence, $S_K$ is closed. 
\end{proof}

For each $x=(a,b)\in S_0$, let $t(x) = (bz+1-z/2,az)$, where $z$ is the largest element of $[0,1/2]$ such that $(bz+1-z/2,az) \in S_0$. It is clear that such a $z$ exists for $x\in S_0$ by the definition of $S_0$ and the fact that $S_0$ is closed. We next show that $t(x)$ is a continuous map. For $x=(a,b)\in S_0$, there exists $z\in [0,1/2]$ such that $bz+1-z/2 \le f_2$. Thus, $b \le 2(f_2 - 3/4) < 1/2$. In particular, $S_0$ is a subset of $[0,1]\times [0,2(f_2-3/4)]$.
Define the function $\pi(x) = (a/(2a-2b+1),a/(2a-2b+1))$ for $x=(a,b)\in [0,1]\times [0,2(f_2-3/4)]$ and note that $\pi$ is continuous on its domain. 
Let $I$ be the image $\pi(S_0)$ of $S_0$, which is a closed interval consisting of points $x=(a,a)$ where $0\le a\le 1/(3-4(f_2-3/4))<1/2$. For $x=(a,b)\in \pi^{-1}(I)$, define the function $v(x) = \sup\{z\ge 0\,:\,(bz+1-z/2,az)\in S_0\}$. Observe that $(a/(2a-2b+1)-1/2,a/(2a-2b+1))=\frac{1}{2a-2b+1}(b-1/2,a)$. Thus, for all $x = (a,b) \in \pi^{-1}(I)$, 
\[
v(\pi(x)) = \sup\left\{z \ge 0\,:\,(1,0)+\frac{z}{2a-2b+1}(b-1/2,a) \in S_0\right\} = (2a-2b+1)v(x).
\]
In particular, $v(x)$ is well-defined and finite for $x\in \pi^{-1}(I)$, as, for any such $x$, there exists a point $y$ of $S_0$ for which $\pi(x) = \pi(y)$ and, since $v(y)$ is finite, $v(\pi(x))=v(\pi(y))$ is finite and so is $v(x)$. For $x\in \pi^{-1}(I)$, define $u(x) = (bv(x)+1-v(x)/2,av(x))$. We then have 
\[
u(\pi(x)) = (1,0) + \frac{v(\pi(x))}{2a-2b+1} (b-1/2,a) =  (1,0) + v(x)(b-1/2,a) = u(x).
\]
Noting that $I \subseteq \pi^{-1}(I)$, let $\tilde{v}:I \to \mathbb{R}$ be the restriction of $v$ to $I$ and $\tilde{u}: I\to \mathbb{R}^2$ the restriction of $u$ to $I$. The next claim shows that $\tilde{v}$ is continuous on $I$. 
\begin{claim}
The function $\tilde{v}$ is continuous on $I$.
\end{claim}
\begin{proof}
Recall that, for any $x=(a,a)\in I$, we have $a\le 1/(6-4f_2)<1/2$. Since $S_0 \subseteq B = [0,f_2-\epsilon]^2$, we have $1+\tilde{v}(x)(a-1/2) \ge 0$ and so $\tilde{v}(x) \le 1/(1/2-1/(6-4f_2))$ for all $x\in I$. Similarly, 
$1+\tilde{v}(x)(a-1/2) \le f_2$ and $\tilde{v}(x) \ge 2(1-f_2)$ for all $x\in I$. Thus, there exist constants $\lambda, \Lambda>0$ such that $\lambda<\tilde{v}(x)<\Lambda$ for all $x\in I$. 

Let $i_1=(a_1,a_1),\,\,i_3=(a_3,a_3)\in I$ and $i_2 = (a_2,a_2)$, where $a_2 = ca_1+(1-c)a_3$ is a convex combination of $i_1$ and $i_3$. Let $c'=\frac{c\tilde{v}(i_3)}{c\tilde{v}(i_3) +(1-c)\tilde{v}(i_1)}$. We claim that $\tilde{v}(i_2) \ge c'\tilde{v}(i_1)+(1-c')\tilde{v}(i_3)$. Let $z = c'\tilde{v}(i_1)+(1-c')\tilde{v}(i_3)$. Then 
\[
c' a_1\tilde{v}(i_1) + (1-c') a_3\tilde{v}(i_3) = \frac{\tilde{v}(i_1)\tilde{v}(i_3) (ca_1+(1-c)a_3)}{c\tilde{v}(i_3)+(1-c)\tilde{v}(i_1)} = a_2 z
\]
and
\[c' (a_1-1/2) \tilde{v}(i_1) + (1-c') (a_3-1/2) \tilde{v}(i_3) = \frac{\tilde{v}(i_1)\tilde{v}(i_3) (c(a_1-1/2)+(1-c)(a_3-1/2))}{c\tilde{v}(i_3)+(1-c)\tilde{v}(i_1)} = (a_2 -1/2) z.
\]
Therefore, writing $p_1 = (a_1\tilde{v}(i_1)-\tilde{v}(i_1)/2+1,a_1\tilde{v}(i_1))$ and $p_3 = (a_3\tilde{v}(i_3)-\tilde{v}(i_3)/2+1,a_3\tilde{v}(i_3))$, we have that $((a_2-1/2)z+1,a_2z) = c'p_1+(1-c')p_3$. Since $S_0$ is convex and $p_1,p_3\in S_0$, we thus have that $((a_2-1/2)z+1,a_2z) \in S_0$. In particular, $\tilde{v}(i_2) \ge z  = c'\tilde{v}(i_1)+(1-c')\tilde{v}(i_3)$. Hence, since $c'=\frac{c\tilde{v}(i_3)}{c\tilde{v}(i_3) +(1-c)\tilde{v}(i_1)} \ge 1 - \frac{(1-c)\Lambda}{\lambda}$,
for all points $i_1,i_2 \in I$ such that there exists $i_3$ with $i_2 = ci_1+(1-c)i_3$, we have 
\begin{equation}
\tilde{v}(i_2) \ge \left(1 - \frac{(1-c)\Lambda}{\lambda}\right)\tilde{v}(i_1). \label{eq:bound-tildev}
\end{equation}

Using this, we now show that $\tilde{v}$ is lower semi-continuous on $I$. Indeed, assume otherwise that there exists a sequence of points $x_i\in I$ converging to $x = (a,a) \in I$ with $\liminf \tilde{v}(x_i) = w < \tilde{v}(x)$. For any $\eta>0$, there exists $\delta>0$ such that if $|x_i-x|<\delta$, then we can write $x_i = cx+(1-c)y$ with $y \in I$ and $c>1-\eta$. By (\ref{eq:bound-tildev}), we therefore have 
\[
\tilde{v}(x_i) \ge \left(1 - \frac{(1-c)\Lambda}{\lambda}\right)\tilde{v}(x) \ge \left(1 - \frac{\eta\Lambda}{\lambda}\right)\tilde{v}(x).
\]
But, for $\eta$ sufficiently small, this contradicts our assumption that $\lim \inf \tilde{v}(x_i) = w < \tilde{v}(x)$. 

Next, we show that $\tilde{v}$ is upper semi-continuous on $I$. Indeed, assume otherwise that there is a sequence of points $x_i\in I$ converging to $x = (a,a) \in I$ with $\limsup \tilde{v}(x_i) = w > \tilde{v}(x)$. We can then extract a subsequence $x_{i_j}$ for which $\tilde{v}(x_{i_j})$ converges to $w$. 
But then, by the fact that $S_0$ is closed, $(aw+1-w/2,aw) \in S_0$ and, hence, $\tilde{v}(x) \ge w$, a contradiction. 

Therefore, since $\tilde{v}$ is both lower and upper semi-continuous on $I$, it is continuous on $I$.
\end{proof}

Since $\tilde{v}$ is continuous on $I$, we obtain that $\tilde{u}$ is also continuous on $I$. Then, by the continuity of $\pi$ on $S_0$ and the fact that $u(x)=u(\pi(x))=\tilde{u}(\pi(x))$, $u$ is continuous on $S_0$ and, hence, $v$ is continuous on $S_0$. Thus, $x \mapsto t(x) = (1,0) + \min(1/2,v(x)) (b-1/2,a)$ for $x=(a,b)$ is also continuous on $S_0$. 

Since $t$ is a continuous map from $S_0$ to itself and $S_0$ is bounded, closed and convex, we may apply the Brouwer fixed-point theorem to conclude that $t$ has a fixed point $x_0$. Let $x_0 = (a_0,b_0)$. We then have, for some $z\in [0,1/2]$, that
\[
b_0z+1-z/2=a_0,\qquad a_0z=b_0.
\]
Thus, 
\[
a_0 = \frac{1-z/2}{1-z^2} \ge \inf_{z\in [0,1/2]} \frac{1-z/2}{1-z^2} \ge f_2.
\]
However, this is a contradiction, since $a_0 \le f_2-\epsilon$ for $x_0=(a_0,b_0)\in S_0$. 
\end{proof}

\section{Concluding remarks}

We conjecture that our upper bound for $c_r$ is also tight for three or more colors.

\begin{conj}
For any integer $r \geq 3$, $c_r$ is equal to
$$\left(1-\frac{1}{2b_0}\right)\left(1+\frac{1}{2rb_0-r}\right),$$ 
where $b_0$ is the unique root of the polynomial $b^r-2rb+r-1$ with $b > 1$.
\end{conj}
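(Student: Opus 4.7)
The upper bound for this conjecture is already delivered by the coloring $n \mapsto \lfloor \log_b \log n \rfloor \pmod r$ with $b = b_0$, so the plan is to prove the matching lower bound by extending the three-step pipeline used for $r = 2$ in Section~\ref{sec:lower}. First I would invoke Lemma~\ref{mainlem} (which already holds for all $r$) to build an auxiliary coloring $\alpha:\mathbb{N}\to[r]$ such that $\Sigma(A_{\alpha(n)})$ contains $[Ce^n,C'e^{2n}]$, and then form the set-valued coloring $\phi(n) = \{\alpha(j) : n/2 \le j \le n\}$. Lemma~\ref{lem:denred} is also stated for general $r$, so it shows that the upper logarithmic density of $\Sigma(A_i)$ dominates the upper density of $S(\phi, i)$. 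The whole difficulty is thus concentrated in an analog of Lemma~\ref{lem:r=2}: the claim that for any $\alpha: \mathbb{N} \to [r]$, some $S(\phi, i)$ has upper density at least $f_r := (1 - 1/(2b_0))(1 + 1/(2rb_0 - r))$.

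For this density statement I would imitate the fixed-point argument of Lemma~\ref{lem:r=2} in $r$ dimensions. Assuming a reduction to a coloring specified by thresholds $1 = H_0 < H_1 < \cdots$ whose color sequence cycles through $1, 2, \ldots, r, 1, 2, \ldots$ and whose ratios $z_n = (H_{n-1}-1)/(H_n-1)$ satisfy the appropriate combinatorial bound forced by the reduction, I would track the $r$-tuple of partial densities $(\bar{a}_n^{(1)}, \ldots, \bar{a}_n^{(r)})$ of $S(\phi, i)$ on $[2(H_n - 1)]$. This should yield an affine-in-each-coordinate recursion $\bar{a}_n^{(i)} = F_i(\bar{a}_{n-1}, z_n)$, so that the persistent set $S_0 \subseteq [0, f_r - \epsilon]^r$ is convex and closed by essentially the averaging argument of Claim~\ref{clm:cnvx}, and a higher-dimensional analog of the projection $\pi$ together with the extremal map $t: S_0 \to S_0$ is continuous via the same semi-continuity argument. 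Brouwer's theorem then supplies a fixed point satisfying an $r$-variable system whose extremum, identified by a Lagrange multiplier computation at $z = 1/b_0$, matches the bound $f_r$ through the defining identity $b_0^r - 2rb_0 + r - 1 = 0$.

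The hard part, by a wide margin, is the initial reduction to a cyclic threshold structure. The $r = 2$ argument recolors $[H_{i+1}, H_{i+2})$ with $\alpha(H_i)$ whenever $H_{i+2} \le 2(H_{i+1}-1)$; this cannot worsen any density because with only two colors a color change is automatically into the alternating pattern. For $r \ge 3$, however, consecutive blocks can be assigned any of the $r-1$ other colors, and the $(r-1)!$ cyclic orderings of consecutive color changes are all combinatorially distinct; recoloring a block may shrink some $S(\phi, c)$ only at the cost of enlarging another, so the naive recoloring is no longer monotone. A genuinely new idea---perhaps a randomized rearrangement, a monotone potential function on color permutations, or an argument that splits the $r$-color inequality into a family of pairwise inequalities to be recombined by convexity---seems necessary to force a cyclic structure without sacrificing simultaneous control of every density. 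This is precisely the gap where the authors' methods appear to fall short, and bridging it is what would be required to settle the conjecture.
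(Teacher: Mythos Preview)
The statement you are addressing is a \emph{conjecture}, and the paper does not prove it; there is no ``paper's own proof'' to compare against. Your proposal is likewise not a proof but an outline of how one might try to extend the $r=2$ argument, together with an honest identification of the step that fails. On that score your analysis is accurate and essentially coincides with the authors' own assessment in the concluding remarks: the upper bound is already established for all $r$, Lemmas~\ref{mainlem} and~\ref{lem:denred} are stated for general $r$, and the authors explicitly say that their methods appear to go through \emph{provided} the auxiliary coloring $\alpha$ is cyclic. The obstruction you isolate---that for $r\ge 3$ the recoloring trick $[H_{i+1},H_{i+2})\mapsto \alpha(H_i)$ is no longer monotone because there are $r-1$ choices for each new block---is exactly why the paper stops at $r=2$.

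One small caution on the part you present as routine: even under the cyclic assumption, the higher-dimensional analog of the convexity/continuity machinery (Claim~\ref{clm:cnvx} and the projection $\pi$) is not as automatic as you suggest. In the $r=2$ case the geometry reduces to a one-parameter family of segments emanating from $(1,0)$, and the key observation that two such segments and a convex combination of their endpoints are intersected in the right order is genuinely two-dimensional. For $r\ge 3$ the analogous recursion involves an $r$-tuple of densities updated by a single scalar $z_n$, and showing that the persistent set $S_0\subseteq\mathbb{R}^r$ is convex and that the extremal map $t$ is continuous would require reworking that geometric argument rather than merely repeating it. The paper only says ``it seems'' the method applies in the cyclic case, so this step too should be treated as work to be done, not a formality.
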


We have explored this conjecture in some detail ourselves, but were unable to establish the optimality of our upper bound for $c_r$ without additional assumptions. For instance, it seems that our methods do apply if the auxiliary coloring $\alpha$ defined in Section~\ref{sec:lower} is assumed to be \emph{cyclic}, by which we mean that the $i$th monochromatic interval in $\alpha$ has color $i \pmod{r}$ for all $i \geq 1$. 
Since every two-coloring of the positive integers is automatically cyclic in this sense, this restriction does not hamper us in that case.

\end{document}